\title{Update: Remarks on Countable Tightness} 
\author{Marion Scheepers}
\newtheorem{theorem}{{\bf Theorem}}
\newtheorem{lemma}[theorem]{{\bf Lemma}}
\newcommand{\naturals}{{\mathbb N}}
\newcommand{\sone}{{\sf S}_1}
\newcommand{\gone}{{\sf G}_1}
\date{September 28, 2015} 
\subjclass[2000]{ 54A35, 54D65}
\keywords{Selection principle, countable strong fan tightness, infinite game}
\address{Department of Mathematics\\ Boise State University\\ Boise, Idaho 83725}
\email{mscheepe@boisestate.edu}
\begin{document}
\begin{abstract}
The proof of Theorem 11 of the paper \cite{RCT} relies on Lemma 10 of that paper. The offered proof of Lemma 10 had shortcomings, and I was recently asked for details. This note gives an alternative, complete proof of \cite{RCT}, Lemma 10. 
\end{abstract}
\maketitle

In \cite{RCT} we considered the selection principle $\sone(\mathcal{A},\mathcal{B})$ for specific instances of families $\mathcal{A}$ and $\mathcal{B}$ of sets. Recall that $\sone(\mathcal{A},\mathcal{B})$ denotes the statement that there is for each sequence $(O_n:n\in\naturals)$ of elements of $\mathcal{A}$ a corresponding sequence $(x_n:n\in\naturals)$ such that for each $n$ we have $x_n\in O_n$, and $\{x_n:n\in\naturals\}\in\mathcal{B}$.

There is a natural game, denoted $\gone(\mathcal{A},\mathcal{B})$ associated with this selection principle: This two-player game is played as follows. There is an inning for each $n<\omega$. In inning $n$ player ONE selects and $O_n\in\mathcal{A}$, and then TWO responds by selecting an $x_n\in O_n$. A play $(O_0,\; x_0,\; \cdots,\; O_n,\; x_n\cdots)$ is won by player TWO if the set $\{x_n:n<\omega\}$ is an element of the family $\mathcal{B}$; else, the play is won by player ONE.

If player ONE does not have a winning strategy in the game $\gone(\mathcal{A},\mathcal{B})$, then the selection principle $\sone(\mathcal{A},\mathcal{B})$ holds of the pair $\mathcal{A},\; \mathcal{B}$. For many topological families $\mathcal{A}$ and $\mathcal{B}$ it is the case that under appropriate circumstances also the converse holds: 
The selection principle $\sone(\mathcal{A},\mathcal{B})$ implies that ONE has no winning strategy in the game $\gone(\mathcal{A},\mathcal{B})$. Theorem 11 of \cite{RCT} was intended to demonstrate an extreme case of failure of this converse for well-studied examples of $\mathcal{A}$ and $\mathcal{B}$. 

More precisely: In \cite{Sakai} Sakai defined the notion of countable strong fan tightness at the point $x$ of a topological space $(X,\tau)$. This notion is defined as follows: For the point $x\in X$ we define
\[
  \Omega_x = \{A\subseteq X\setminus \{x\}: x \mbox{ is in the closure of }A\}.
\]
Then $(X,\tau)$ is said to have \emph{countable strong fan tightness at} $x$ if the selection principle $\sone(\Omega_x,\Omega_x)$ holds. 

In \cite{COC3} it was shown that for certain ``nice" spaces $(X,\tau)$ it is true that $\sone(\Omega_x,\Omega_x)$ holds if, and only if, ONE has now winning strategy in the game $\gone(\Omega_x,\Omega_x)$. In \cite{COC3} also an ad hoc example of the failure of this equivalence was given. In Theorem 11 of \cite{RCT} the following more extreme example is given, assuming the Continuum Hypothesis (CH):
\begin{theorem}[CH]\label{oneextreme}
There is a $\textsf{T}_3$ space $X$ that has countable strong fan tightness at each $x\in X$, yet ONE has a winning strategy in $\textsf{G}_1(\Omega_x,\Omega_x)$ at each $x\in X$.
\end{theorem}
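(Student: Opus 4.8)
The plan is to realize $X$ as a function space $C_p(Y)$ over a carefully engineered separable metrizable space $Y$, so that the two required—and prima facie conflicting—properties become properties of $Y$ that a single transfinite recursion of length $\omega_1$ can build under CH. Two translations drive the reduction. By Sakai's characterization \cite{Sakai}, $C_p(Y)$ has countable strong fan tightness at the constant function $\mathbf 0$ if and only if $Y$ satisfies $\sone(\Omega,\Omega)$ for $\omega$-covers (equivalently, every finite power $Y^n$ is a Rothberger space, i.e. satisfies $\sone(\open,\open)$). By the game-theoretic transfer theorems in the spirit of \cite{COC3}, ONE has a winning strategy in $\gone(\Omega_{\mathbf 0},\Omega_{\mathbf 0})$ on $C_p(Y)$ if and only if ONE has a winning strategy in $\gone(\Omega,\Omega)$ on $Y$. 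Since $C_p(Y)$ is a topological group, it is homogeneous, so both properties established at $\mathbf 0$ hold at every point; and $C_p(Y)$ is Tychonoff, hence $\textsf{T}_3$. It therefore suffices to construct a set of reals $Y$ with (A) $\sone(\Omega,\Omega)$, yet (B) a winning strategy for ONE in $\gone(\Omega,\Omega)$. This last statement is the substance of Lemma 10 of \cite{RCT}, which the present note re-proves.

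First I would fix, using CH, an enumeration in order type $\omega_1$ of all countable sequences $(\mathcal U^k : k<\omega)$ of $\omega$-covers of the finite powers of the ground set, noting that after CH there are only $2^{\aleph_0}=\aleph_1$ such sequences once each $\omega$-cover is refined to members of a fixed countable base and thereby coded by a real. I would then build $Y=\{y_\alpha:\alpha<\omega_1\}$ by recursion. At stage $\alpha$ the bookkeeping hands me a sequence of covers; I choose the new point(s) $y_\alpha$ so that from each $\mathcal U^k$ a single member can be selected whose selections together $\omega$-cover everything enumerated so far. A standard closing-off argument across all $\alpha<\omega_1$ then yields (A): every offline sequence of $\omega$-covers of $Y$ (and of each $Y^n$) admits a one-per-cover selection that is again an $\omega$-cover.

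Running in parallel with the selection bookkeeping, the recursion must maintain a rigid combinatorial skeleton on the points of $Y$—most naturally placed inside the Hilbert cube $\hilbertcube$ as a Gorelic space $\gorelicspace$—that powers ONE's strategy in (B). The idea is a tree- or ladder-indexing of $\{y_\alpha\}$ together with an increasing exhaustion by compact pieces $K_0\subseteq K_1\subseteq\cdots$, arranged so that ONE can answer TWO's partial play $x_0,\dots,x_{n-1}$ with an $\omega$-cover $\mathcal U_n$ so finely adapted that any legal response $x_n\in\mathcal U_n$ is forced into a shrinking region; the design guarantees that the full sequence $\{x_m:m<\omega\}$ can never be an $\omega$-cover, so ONE wins every play. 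The delicate point is that adding $y_\alpha$ to satisfy the selection demand must never give TWO an escape route through an already-committed branch of ONE's strategy, so at each stage the choice of $y_\alpha$ is constrained to respect the skeleton.

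The main obstacle is precisely the tension between (A) and (B), and it is here that CH does the real work. Property (A) pushes $Y$ to be small and Rothberger-like so that every \emph{offline} sequence of covers admits a clustering selection, while (B) demands enough structure that ONE can \emph{online} defeat every adaptive selection by TWO. These coexist only because the game and the selection principle are genuinely asymmetric: in the selection principle the selector sees the entire sequence of covers in advance and may look ahead, whereas in $\gone(\Omega,\Omega)$ ONE commits each cover only after seeing TWO's latest move and can therefore diagonalize against TWO's finite information. The crux—and the step I expect to require the most care, being exactly the gap in the original argument—is to verify that one recursion can discharge all $\aleph_1$ offline selection requirements while preserving, stage by stage, the branching skeleton underlying ONE's winning strategy: that is, that securing (A) at stage $\alpha$ never injures the structure needed for (B). Establishing this compatibility is the heart of the corrected Lemma 10, and with it Theorem \ref{oneextreme} follows from the two $C_p$-translations above.
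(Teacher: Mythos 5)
Your reduction is self-defeating: the two translation theorems you invoke, taken together, prove that the space you propose to build cannot exist. You want a separable metrizable $Y$ (a set of reals, or a subspace of the Hilbert cube) satisfying (A) $\sone(\Omega,\Omega)$ while (B) ONE has a winning strategy in $\gone(\Omega,\Omega)$ on $Y$. But the game-theoretic theorem of \cite{COC3} that you cite as your second translation is precisely an \emph{equivalence}: for spaces all of whose finite powers are Lindel\"of --- and any $Y$ satisfying $\sone(\Omega,\Omega)$ has all finite powers Rothberger, hence Lindel\"of, so this hypothesis is automatic --- the principle $\sone(\Omega,\Omega)$ holds if and only if ONE has \emph{no} winning strategy in $\gone(\Omega,\Omega)$. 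So (A) and (B) are flatly contradictory; no transfinite recursion, under CH or any other hypothesis, can reconcile them, and the ``tension'' you describe as the heart of the matter is not delicate but fatal. Equivalently, on the $C_p$ side: spaces of the form $C_p(Y)$ are exactly the ``nice'' spaces referred to in this note, for which $\sone(\Omega_x,\Omega_x)$ \emph{is} equivalent to ONE having no winning strategy in $\gone(\Omega_x,\Omega_x)$. The entire point of Theorem \ref{oneextreme} is that the counterexample must be found outside this class, so realizing $X$ as a function space is the one approach that cannot succeed.

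You have also misread Lemma 10 (Lemma \ref{modestlemma} here): it is not the statement that some set of reals satisfies (A) and (B). It is a transfer lemma for a different pair of families: if $X$ is $\textsf{T}_1$ with no isolated points and ONE has a winning strategy in $\gone(\mathfrak{D},\mathfrak{ND})$ --- ONE plays dense subsets of $X$, TWO picks a point from each, and ONE wins if TWO's set of points is discrete --- then ONE has a winning strategy in $\gone(\Omega_x,\Omega_x)$ at every $x\in X$ (ONE simply plays $\sigma$ with the point $x$ deleted from each dense set; a discrete set not containing $x$ cannot have $x$ in its closure, by $\textsf{T}_1$). The paper's actual route to Theorem \ref{oneextreme} is therefore: under CH, construct directly a $\textsf{T}_3$ space $X$ with no isolated points which has countable strong fan tightness at each point and on which ONE wins the dense-versus-discrete game $\gone(\mathfrak{D},\mathfrak{ND})$ --- a construction in the style of the Berner--Juh\'asz point-picking games \cite{BJ}, carried out in \cite{RCT} --- and then apply Lemma \ref{modestlemma} to convert that single winning strategy into winning strategies in $\gone(\Omega_x,\Omega_x)$ at every point. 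This avoids the contradiction you run into because $\gone(\mathfrak{D},\mathfrak{ND})$ concerns dense sets rather than $\omega$-covers, and the space constructed is far from being a $C_p(Y)$; no duality theorem ties ONE's strategy in that game to the failure of $\sone(\Omega_x,\Omega_x)$.
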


\section{Lemma 10}

For topological space $(X,\tau)$ let $\mathfrak{D}$ denote the set $\{A\subseteq X:\; A \mbox{ dense in }X\}$. Also, let $\mathfrak{ND}$ denote the set $\{A\subseteq X:\; A \mbox{ is not discrete}\}$.  In \cite{RCT} the proof of Theorem \ref{oneextreme} made use of a lemma regarding the infinite game $\gone(\mathfrak{D},\; \mathfrak{ND})$. This game is a dual version of a game introduced in \cite{BJ} by Berner and Juhasz. More details about this game and construction of the dual game appear in the sources \cite{BJ,  COC6, RCT}.

The claimed proof ot Lemma 10 of \cite{RCT} is flawed. Here is a correct argument:

\begin{lemma}\label{modestlemma}
Let $(X,\tau)$ be a $\textsf{T}_1$-space with no isolated points. Assume that player ONE has a winning strategy in the game $\gone(\mathfrak{D},\mathfrak{ND})$ on $X$. Then at each $x\in X$ ONE has a winning strategy in the game $\gone(\Omega_x,\Omega_x)$.
\end{lemma}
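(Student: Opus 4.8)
The plan is to turn a winning strategy $\sigma$ for ONE in $\gone(\mathfrak{D},\mathfrak{ND})$ into a winning strategy for ONE in $\gone(\Omega_x,\Omega_x)$ by a straightforward strategy‑copying argument: in the local game at $x$, ONE will play the very dense sets that $\sigma$ prescribes in the global game, but with the point $x$ deleted, and will relay TWO's local responses back into $\sigma$ as TWO's global responses. The point is that a run of the local game produced this way is, after forgetting $x$, literally a run of the global game in which ONE follows $\sigma$, so the global winning condition can be read off and translated into the local one.

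First I would check the compatibility step that makes the copying legal, and this is exactly where the hypotheses on $X$ enter: if $D\in\mathfrak{D}$, then $D\setminus\{x\}\in\Omega_x$. Indeed $D\setminus\{x\}\subseteq X\setminus\{x\}$ by construction, and for any open neighborhood $U$ of $x$ the set $U\setminus\{x\}=U\cap(X\setminus\{x\})$ is open (by $\textsf{T}_1$, so that $\{x\}$ is closed) and nonempty (because $x$ is not isolated); density of $D$ then supplies a point of $D$ in $U$ other than $x$, so $x\in\overline{D\setminus\{x\}}$. Granting this, I would define ONE's local strategy by $\tau(\langle\rangle)=\sigma(\langle\rangle)\setminus\{x\}$ and $\tau(a_0,\dots,a_{n-1})=\sigma(a_0,\dots,a_{n-1})\setminus\{x\}$. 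If TWO answers a $\tau$‑play with $a_0,a_1,\dots$, then $a_n\in\sigma(a_0,\dots,a_{n-1})\setminus\{x\}\subseteq\sigma(a_0,\dots,a_{n-1})$, so $(a_n)$ is a legal run of $\gone(\mathfrak{D},\mathfrak{ND})$ in which ONE uses $\sigma$; since $\sigma$ is winning there, the selected set $\{a_n:n<\omega\}$ lies outside $\mathfrak{ND}$, i.e.\ it is discrete, and in particular closed, so $\overline{\{a_n:n<\omega\}}=\{a_n:n<\omega\}\not\ni x$. Hence $\{a_n:n<\omega\}\notin\Omega_x$, ONE wins the local game, and as $x$ was arbitrary we are done.

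The step I expect to be the crux is the translation of $\sigma$'s victory into non‑accumulation at $x$, and getting it right is presumably what the corrected argument must be careful about. What is needed is that the selection's discreteness forces it to be closed, so that its closure omits the deleted point $x$; mere \emph{relative} discreteness would not suffice, since a relatively discrete selection can still cluster at $x$ (as $\{\,1/n:n\ge 1\,\}$ clusters at $0$), and then $\{a_n:n<\omega\}$ would land back in $\Omega_x$ and ONE would lose. Thus I would make explicit that membership in $\mathfrak{ND}$ encodes failure of closed discreteness, which is exactly the feature that closes the gap; the no‑isolated‑points assumption plays the complementary role of guaranteeing, via the first step, that each deleted dense set remains an admissible local move accumulating at $x$, so that $\tau$ is even well defined.
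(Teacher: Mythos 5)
Your proposal is correct and follows essentially the same route as the paper: delete $x$ from each dense set prescribed by the winning strategy $\sigma$, relay TWO's local answers back into $\sigma$ as a legal run of $\gone(\mathfrak{D},\mathfrak{ND})$, and conclude that TWO's picks form a discrete set missing $x$, hence a set not in $\Omega_x$. The only cosmetic difference is the last step --- the paper takes a neighborhood of $x$ meeting TWO's set in at most one point and shrinks it using $\textsf{T}_1$, whereas you invoke closedness of the discrete set directly --- and you correctly identify the crux that ``discrete'' in $\mathfrak{ND}$ must be read in the strong (closed discrete) sense, exactly the reading the paper's argument uses.
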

\begin{proof} 
Let $\sigma$ be a winning strategy for ONE in the game $\gone(\mathfrak{D},\mathfrak{ND})$. For each $x\in X$, define a strategy $\sigma_x$ as follows:
\begin{itemize}
\item{$\sigma_x(\emptyset) = \sigma(\emptyset)\setminus\{x\}$, and }
\item{for each finite sequence $(w_1,\;\cdots,\; w_n)$ of elements of $X$, $\sigma_x(w_1,\;\cdots,\; w_n) = \sigma(w_1,\;\cdots,\; w_n)\setminus\{x\}$.}
\end{itemize}
Since $X$ has no isolated points, for each dense set $D\subset X$ and each $x\in X$, the set $D\setminus\{x\}$ is dense in $X$. Thus, $\sigma_x$ is a strategy for player ONE in the game $\gone(\mathfrak{D},\mathfrak{ND})$. 
Each $\sigma_x$ play of the game $\gone(\mathfrak{D},\mathfrak{ND})$ is a $\sigma$-play during which TWO never picked the element $x$. Thus, $\sigma_x$ is also a winning strategy for ONE in $\gone(\mathfrak{D},\mathfrak{ND})$.

Now we note that at each $x\in X$ the strategy $\sigma_x$ is a winning strategy for ONE in the game $\gone(\Omega_x,\Omega_x)$: For consider a $\sigma_x$-play
\[
   O_1,\; w_1,\; O_2,\; w_2,\; \cdots,\; O_n,\; w_n,\; \cdots
\]
For each $n$, $O_n$ is a dense set not containing the point $x$, and thus is an element of $\Omega_x$. The set $\{w_n:0<n<\omega\}$ is a discrete subset of $X$ and does not contain the point $x$. Thus, let $U$ be a neighborhood of $x$ meeting the set of moves by TWO in at most one point, say $w_n$. Since $x\neq w_n$ and $X$ is $\textsf{T}_1$, there is a neighborhood $W$ of $x$ that does not contain $w_n$. But then $U\cap W$ is a neighborhood of $x$ disjoint from $\{w_n:0<n<\omega\}$. It follows that $\{w_n:0<n<\omega\}$ is not an element of $\Omega_x$.
\end{proof}

\section*{Acknowledgements}
I thank Dr. Boaz Tsaban for pointing out the shortcomings in the original argument given for the proof of Lemma 10 of \cite{RCT}.


\begin{thebibliography}{}

\bibitem{BJ} A.J. Berner and I. Juhasz, \emph{Point-picking games and HFD's}, in: \emph{Models and Sets}, Proceedings of the Logic Colloquium 1983, Springer-Verlag {\bf Lecture Notes in Mathematics} 1103 (1984), 53 - 66. 


\bibitem{Sakai} M. Sakai, \emph{Property C$^{\prime\prime}$ and function spaces}, {\bf Proceedings of the American Mathematical Society} 104 (1988), 917 - 919.

\bibitem{COC3} M. Scheepers, \emph{Combinatorics of open covers (III): Games, $\textsf{C}_p(X)$}, {\bf Fundamenta Mathematicae} 152 (1997), 231 - 254.

\bibitem{COC6} M. Scheepers, \emph{Combinatorics of open covers (VI): Selectors for sequences of dense sets}, {\bf Quaestiones Mathematicae} 22:1 (1999), 109 - 130.

\bibitem{RCT} M. Scheepers, \emph{Remarks on countable tightness}, {\bf Topology and its Applications} 161 (2014), 407 - 432.

\end{thebibliography}
\end{document}